\documentclass[reqno, 11pt,a4paper]{amsart}

\usepackage{fullpage}  
\usepackage{calc,graphicx,amsfonts,amsthm,amscd,epsfig,psfrag,amsmath,amssymb,enumerate,dsfont}
\usepackage{subfig} 
\usepackage{pdfsync}
\usepackage{stmaryrd} 
\usepackage[initials]{amsrefs}

\usepackage[colorlinks=true, pdfstartview=FitV, linkcolor=blue, citecolor=blue, urlcolor=blue,pagebackref=false]{hyperref}

\usepackage[showlabels,sections,floats,textmath,displaymath]{}
\reversemarginpar
\newlength\fullwidth
\numberwithin{equation}{section}

\DeclareMathSymbol{\leqslant}{\mathalpha}{AMSa}{"36} 
\DeclareMathSymbol{\geqslant}{\mathalpha}{AMSa}{"3E} 
\DeclareMathSymbol{\eset}{\mathalpha}{AMSb}{"3F}     
\renewcommand{\leq}{\;\leqslant\;}                   
\renewcommand{\geq}{\;\geqslant\;}                   

\newcommand{\eps}{\epsilon}

\newcommand{\1}{\mathds{1}}

\renewcommand{\l}{\lambda}
\renewcommand{\L}{\Lambda}

\renewcommand{\l}{\lambda}

\renewcommand{\O}{\Omega}

\newtheorem{theorem}{Theorem}[section]

\newtheorem{lemma}[theorem]{Lemma}
\newtheorem{proposition}[theorem]{Proposition}

\newtheorem{remark}[theorem]{Remark}

\newtheorem{definition}[theorem]{Definition}

\newcommand{\cD}{\ensuremath{\mathcal D}}

\newcommand{\cL}{\ensuremath{\mathcal L}}

\newcommand{\cN}{\ensuremath{\mathcal N}}

\newcommand{\cP}{\ensuremath{\mathcal P}}

\newcommand{\cU}{\ensuremath{\mathcal U}}

\newcommand{\bbE}{{\ensuremath{\mathbb E}} }

\newcommand{\bbH}{{\ensuremath{\mathbb H}} }

\newcommand{\bbP}{{\ensuremath{\mathbb P}} }

\newcommand{\bbR}{{\ensuremath{\mathbb R}} }

\newcommand{\bbZ}{{\ensuremath{\mathbb Z}} }

\newcommand{\E}{\mathbb{E}}

  \let\h=\eta      \let\l=\lambda

     \let\L=\Lambda 
\let\O=\Omega      

\renewcommand{\le}{\leq}

\newcommand{\Var}{{\rm Var}}

\newcommand{\ip}[2]{\langle #1,#2\rangle}

\title{Linear time mixing and pre-cutoff for oriented kinetically constrained models}
\author[P. Chleboun]{P. Chleboun$^1$}
 \address{$^1$Department of Statistics, University of Warwick,  Coventry,  CV4 7AL,  United Kingdom.}
\email{paul.i.chleboun@warwick.ac.uk}

\date{}

\begin{document}

\begin{abstract}
  We establish linear time precutoff for oriented kinetically constrained models (KCMs) on a $d$-dimensional box of side length $n$, whenever the finite-volume spectral gap is bounded away from zero uniformly in $n$. 
  A typical example is the North-East model, a $0$-$1$ spin system on the two-dimensional integer lattice that evolves according to the following rule: whenever a site's southerly and westerly nearest neighbours have spin $0$, with rate one it resets its own spin by tossing a $(1-q)$-coin; at all other times its spin remains frozen. 
  This settles, in the \emph{oriented case}, a conjecture which states that there is linear time precutoff for all KCMs in the (open) ergodic parameter regime ($q>q_c$).
  The result has been shown recently, for \emph{general} update families, in a perturbative regime ($q$ close to $1$). The previous general upper bound on the mixing time for oriented update families, in the full ergodic regime, was $O(n \log n)$.
  The proof combines a spectral estimate for a certain killed process with a graphical coupling to a stationary process and a backward path argument, exploiting the orientation of the constraints, to control the probability of failing to couple.\\

\noindent \textbf{Keywords:} North-East model, kinetically constrained models, mixing time, pre-cutoff, spectral gap

\noindent
\textbf{MSC2020:} 60K35, 82C22, 60J27,60J28.
\end{abstract}

\maketitle

\section{Introduction}

Kinetically constrained models (KCMs) are interacting $0$-$1$ particle (spin) systems which were introduced in the physics literature to model the dynamics of glassy systems \cites{FredricksonAndersen1984,Ritort}. See also \cite{HTbook}*{Chapter 1} and references therein for a much more detailed discussion of the history and substantial literature.

The non-conservative KCMs considered here are Markov processes which evolve according to a Glauber-type dynamics with a dynamic constraint.
Independently at rate one each site attempts to update its spin by tossing a $(1-q)$-coin, the update is allowed if and only if the configuration of neighbouring sites satisfies a certain local constraint, otherwise the update is rejected.
The constraint requires that all sites in at least one of the sets of neighbours specified by a fixed update family, are in the $0$ state (facilitating). 
This constraint mimics the ``caging'' effect observed in glassy and amorphous systems.
These models capture many characteristic features of glassy and amorphous materials, such as dynamic facilitation, dynamic heterogeneity and extremely slow relaxation to equilibrium for certain parameter values, while remaining reversible with respect to the product Bernoulli measure.
Stochastic dynamics with a facilitation mechanism also appear in contexts unrelated to glassy systems, such as random walks on upper triangular matrices \cite{Ganguly} and Glauber dynamics of solid-on-solid interfaces and monotone surfaces \cite{Caputo12}.

Despite the simplicity of their definition, and reversibility, the degenerate rates of KCM make them challenging to analyse mathematically, largely due to the lack of monotonicity, complex cooperative effects in the dynamics and sometimes exhibiting ergodicity-breaking phase transitions.
In recent years a detailed understanding of KCMs at equilibrium, in particular in two dimensions, has been achieved \cite{HTbook}*{Chapters 1--6}.
However, because of the mathematical challenges, the understanding of the non-equilibrium dynamics is much more limited (see \cite{HTbook}*{Chapter 7} and references therein).

Results on the non-equilibrium dynamics are mostly restricted to the East model, where the constraint requires a single nearest neighbour in a fixed direction to be in state $0$. 
For this process several model specific tools greatly simplify the analysis.
Linear time mixing in arbitrary dimensions has been establish for the East model across the parameter range $p\in(0,1)$, with cutoff established in one-dimension \cites{CFM15,GLM15}. 
Cutoff has also been established for the East model in higher dimensions with certain boundary conditions in the small-$q$ \cite{CouzinieMartinelli2022} and large-$q$ regimes \cite{CampaillaMartinelli2025}.
Results are also available for the FA-1f model, where the constraint requires at least one nearest neighbour to be in state $0$ (without a direction), for certain values of the parameter $q$, see \cites{BCMRT13,BDT19,Ertul22} and references therein.

For more general update families, results are largely restricted to the perturbative regime $q$ close to $1$, for which there has recently been significant progress, see \cite{HflT} and references therein.
There, local exponential convergence to equilibrium in infinite volume, started from a suitable product measure, and linear-time pre-cutoff in finite boxes with facilitating boundary conditions, have been established for arbitrary update families in the perturbative regime $q$.

In this paper we consider oriented update families, for which there exists a direction such that all sites in the constraint set are on one side of a hyperplane through the site, orthogonal to a fixed direction.
We examine the total-variation mixing time in a box $\L_n=\{1,\dots,n\}^d$ of side length $n$ with facilitating boundary conditions, with the update rule and $q \in (0,1)$ fixed as $n$ diverges. 
Such directed models are one of the few cases where non-perturbative non-equilibrium results have previously been established. 
For this class, an $O(n \log n)$ upper bound strictly within the ergodic regime ($q>q_c$) was established in \cite{CM13}.
Starting from a configuration of all ones, finite speed of propagation gives a lower bound of order $n$ (see \cite{HTbook}*{Proposition 3.12}).
In Theorem \ref{th:main} we establish the matching upper bound of order $n$ for the mixing time, and hence linear-time pre-cutoff, throughout the entire ergodic regime $q>q_c$.
In particular this settles \cite{HTbook}*{Conjecture 7.30} for all oriented update families.

For the North-East model, it has been conjectured \cite{Kordzakhia:2006} that for $q>q_c$, the set of sites which have been updated at least once by time $t$ grows linearly with $t$ and when correctly rescaled converges to a deterministic asymptotic limit shape.
Also, the process as seen by the front is expected to be ergodic, and hence proving such a limit shape and associated CLT may provide a path to cutoff (see \cite{HflT}*{Section 4} where this path is explained in more detail).
The pre-cutoff result here can be viewed as a step towards these much more ambitious goals.

The proof strategy is to consider a version of the KCM dynamics which is killed at the first legal ring at a fixed site.
This killed process is shown to be contracting in $L^2$ with respect to the stationary measure.
This allows us to control the probability of an arbitrary chronological sequence of failed legal-ring intervals under the stationary process, across sites in the box, without having to deal directly with the dependency structure.
Using the standard graphical construction coupling, we consider a monotone increasing sequence of random sets describing the sites that we know must have permanently coupled with the stationary process uniformly over initial configurations. 
Consider a site which has not coupled after many fixed $O(1)$ time intervals.
During each time interval, the failure of a site to couple must be due to either a failure to have previously coupled at least one site in the constraint set, or a failure to have a legal ring \emph{in the stationary process}.
By considering a suitable witness path which moves towards the boundary if there was a site that previously had not coupled, then 
a failure to couple after a large multiple of $n$ time intervals implies a failure of order $n$ legal rings in the stationary process.
This is exponentially unlikely by the contraction of the killed process.
A union bound over all possible witness paths oriented towards the boundary gives the desired upper bound.

\section{Models and main results}

\subsection{Notation and preliminaries}

Let $\Lambda_n=\{1,\dots,n\}^d$ be the $d$-dimensional cube of side $n$ in $\bbZ^d$.
The standard basis vectors of $\bbZ^d$ are denoted by $e_1=(1,\ldots,0),\dots,e_d=(0,\ldots,1)$. For $x\in \bbZ^d$ we write $x=(x_1,\dots,x_d)$.

The set of probability measures on a measurable space $\Omega_n = \{0,1\}^{\Lambda_n}$ is denoted by $\cP(\Omega_n)$. 
Elements of $\Omega_n$ are denoted by lower case Greek letters $\sigma,\eta,\ldots$ and are called configurations. For a configuration $\sigma\in \Omega_n$ and $x\in \Lambda_n$, we write $\sigma_x$ for the value of the configuration at site $x$.

We refer to unit vectors 
\[
u \in S^{d-1} = \left\{ v \in \bbR^d\,:\, \|v\| = 1 \right\}
\]
as directions. For a direction $u\in S^{d-1}$, we define the integer half-space $\bbH_u = \{ x \in \bbZ^d : \langle x,u \rangle< 0 \}$.
The KCM dynamics are defined with respect to an \emph{update family}.
An update family $\cU$ is a finite non-empty collection of finite non-empty subsets of $\bbZ^d\setminus\{0\}$, called update rules. This update family is called \emph{oriented} if there exists a direction $u\in S^{d-1}$ such that for all $U\in \cU$, $U\subset \bbH_u$ (such a direction is typically called \emph{unstable}).

\subsection{The KCM dynamics}

We define the $\cU$-KCM dynamics on $\Omega_n$  in terms of the following graphical construction, which simultaneously couples the dynamics for all initial configurations. 
Fix $p\in(0,1)$ and let $q=1-p$.
For each $x\in \Lambda_n$, let $T_x$ be an independent rate one Poisson process on $[0,\infty)$.
We mark each Poisson process with a collection, $\{S_{x}(t)\}_{t\in T_x}$, of i.i.d. Bernoulli($p$) random variables (coin tosses), independent of the Poisson processes (and each other). 
The probability measure and expectation associated with this graphical construction are denoted by $\bbP$ and $\bbE$, respectively.

Since the constraints may depend on sites outside of $\Lambda_n$, we define the dynamics on the extended configuration space $\Omega = \{0,1\}^{\bbZ^d}$, with the convention that sites outside of $\Lambda_n$ are fixed equal to $0$ at all times (i.e., they are always facilitating).
To reduce notation we do not explicitly distinguish between configurations in $\Omega_n$ and their extensions to $\Omega$.

For an initial configuration $\eta \in \Omega_n$, the $\cU$-KCM dynamics $\{\eta(t)\}_{t\ge 0}$ is defined as follows. At each time $t\in T_x$, if the configuration $\eta(t^-)$ satisfies the constraint at $x$ (i.e., there exists $U\in \cU$ such that $\eta(t^-)_y=0$ for all $y\in x+U$), then the spin at $x$ is updated to $S_{x}(t)$; otherwise, it remains unchanged.
Such update times are called  \emph{legal rings} at $x$.

Define the constraint function at site $x$ by 
\begin{align}
    \label{eq:constraint}
    c_x(\eta) = \1_{\{\exists U\in \cU : \eta_y=0 \text{ for all } y\in x+U\}}\,,
\end{align}
then, we have
\begin{align}
    \label{eq:dynamics}
    \eta_x(t) = \begin{cases}
        S_{x}(t) & \text{if } t\in T_x \text{ and } c_x(\eta(t^-))=1\,,\\
        \eta_x(t^-) & \text{otherwise.}
    \end{cases}
\end{align}
Denote the law of the process started from $\eta$ by $\bbP_\eta$ and the corresponding expectation by $\bbE_\eta$. Hence, $\bbP_\eta(\eta(t)\in \cdot) = \bbP(\eta(t)\in \cdot\mid \eta(0)=\eta)$. Furthermore, for $\mu \in \cP(\Omega_n)$, denote the law of the process started from $\mu$ by $\bbP_\mu$ and the corresponding expectation by $\bbE_\mu$.

Note that, since $c_x(\eta)$ does not depend on $\eta_x$, it is straightforward to see that the dynamics are reversible with respect to the product Bernoulli($p$) measure $\pi_n= \textrm{Ber}(p)^{\otimes \L_n}$ on $\Omega_n$.

For oriented update rules, the above construction defines an irreducible continuous-time Markov chain on $\Omega_n$ with unique stationary distribution $\pi_n$ and generator $\cL_n$ given by
\begin{align}
    \label{eq:generator}
    \cL_n f(\eta) = \sum_{x\in \Lambda_n} c_x(\eta) \left[\pi_x( f)(\eta) - f(\eta) \right]\, \quad \text{for all } f:\Omega_n\to \bbR\,,
\end{align}
where $\pi_x(f)(\eta)$ denotes the conditional mean $\pi_n(f\mid  \{\eta_y\}_{y\neq x}) = p f(\eta^{x\leftarrow 1}) + q f(\eta^{x\leftarrow 0})$ with $\eta^{x\leftarrow i}_x = i$ and $\eta^{x\leftarrow i}_y = \eta_y$ for $y \neq x$, and $q = 1-p$.

We finish this section with definitions of the spectral gap and
mixing time of the process.
\begin{definition}[spectral gap]
\label{gap}
The \emph{spectral gap}, $\l^{(n)}$, of the infinitesimal generator \eqref{eq:generator} is the smallest positive eigenvalue of $-\cL_n$ , and is given by the variational
principle
\begin{align}
  \label{eq:5}
  \l^{(n)} : = \inf_{\substack{f:\O_n \to \bbR \\ f \neq \textrm{const}}}\frac{\cD_n(f)}{\Var_{\pi_n}(f)},
\end{align}
where $\cD_n(f)= -\pi_n\left(f\cL_n f\right)$ is the Dirichlet form of
the process.
\end{definition}

\begin{remark}
    The process is well-defined on the infinite lattice, and we denote the associated spectral gap by $\l$. 
    It turns out that $\l = \inf_n \l^{(n)} = \lim_{n \to \infty} \l^{(n)}$ \cite{HTbook}*{Proposition 3.11},\cite{CMRT}*{Lemma 2.11, Proposition 2.13}.
    The exponential decay critical parameter is given by $\tilde{q}_c(\cU) = \inf\{q>0\,:\, \l > 0\}$.
    If we define $\tau_0$ to be the first time the origin is updated in the infinite volume process, then the ergodicity critical parameter is given by $q_c(\cU) = \inf\{q>0\,:\, \bbP_\pi(\tau_0 <\infty) = 1\}$, where $\pi$ is the infinite volume product Bernoulli measure with parameter $1-q$. 
    For \emph{oriented} KCM $\tilde{q}_c(\cU) = q_c(\cU)$, see for example \cite{Hartarskybootstrap}*{Corollary 1.8} and \cite{HTbook}*{Section 7.1}. 
\end{remark}
\begin{definition}[Total variation mixing time]
For $t \geq 0$, let
\begin{align}
  \label{eq:dn}
 d_n(t):=\sup_{\eta\in\Omega_n}\|\bbP_\eta(\eta(t) \in \cdot)-\pi_n\|_{\rm{TV}},
\end{align}
where 
\[
\|\mu-\nu\|_{\rm{TV}} = \sup_A|\mu(A)-\nu(A)| = \frac{1}{2}\sum_{\eta\in\Omega_n}|\mu(\eta)-\nu(\eta)|\,.
\] 
Then, the \emph{total variation mixing time} of the $\cU$-KCM on $\Lambda_n$ is defined by
\begin{align}
  \label{eq:tmix}
 t_{\mathrm{mix}}^{(n)}(\varepsilon):=\inf\{t \geq 0\,:\,d_n(t)\leq \varepsilon\}.
\end{align}
\end{definition}

\subsection{Main results}
\begin{theorem}
\label{th:main}
Let $\cU$ be an oriented update family and assume $\inf_n \l^{(n)} > 0$, which holds for  $q\in(q_c(\cU),1)$. Then the mixing time of the $\cU$-KCM on $\Lambda_n$, with facilitating boundary conditions, exhibits pre-cutoff in linear time. More precisely, there exists a constant $C=C(q,d,\cU)\geq 1$ such that, for all $\varepsilon \in (0,1)$ and all $n$ sufficiently large (depending on $\varepsilon$)
\begin{align}
    \label{eq:main}
    C^{-1} n \leq t_{\mathrm{mix}}^{(n)}(\varepsilon) \leq C n\,.
\end{align}
\end{theorem}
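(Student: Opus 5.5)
The lower bound follows from finite speed of propagation (\cite{HTbook}*{Proposition 3.12}). Start from the all-ones configuration. A site $x$ can flip only after a chain of legal rings has carried a $0$ in from the boundary, and in the oriented case that chain runs along the unstable direction. With exponentially high probability no such chain of length of order $n$ fits into time $c n$. Hence the sites at distance of order $n$ from the facilitating part of the boundary (measured along $u$) are still all $1$ at time $cn$. Under $\pi_n$ that event has probability exponentially small in $n$, so $d_n(cn)\to 1$ and $t_{\mathrm{mix}}^{(n)}(\varepsilon)\geq C^{-1}n$ for $n$ large.

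For the upper bound I will use the graphical construction to couple $\eta(t)$, from an arbitrary initial configuration, with the stationary process $\xi(t)$ started from $\pi_n$. Then
\[
d_n(t)\leq \sup_\eta \bbP(\eta(t)\neq \xi(t)) \leq \bbP\big(\exists x:\ x\notin A_t\big),
\]
where $A_t$ is the random set of sites coupled uniformly over all initial conditions. I divide time into intervals of length $1$ and let $A_k$ grow monotonically: $x$ joins $A_k$ at step $k$ if, during interval $k$, every site of $x+U$ (for the $U$ that facilitates $x$ in the stationary process) already lies in $A_{k-1}$ or outside $\Lambda_n$, and $x$ has a legal ring in $\xi$. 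Once $x$ and its relevant constraint sites agree in both processes, the two update $x$ identically, so $x$ stays coupled.

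Thus if $x\notin A_k$, then at each step $j\leq k$ either (a) $x$ had no legal ring in $\xi$ during interval $j$, or (b) some $y\in x+U$ was uncoupled at step $j-1$. I build a backward witness path. Starting at $(x,k)$, whenever (b) occurs I move to $y$, which lies strictly further in the $-u$ direction, i.e.\ towards the boundary, and go back one time step; whenever (a) occurs I stay at $x$ and record a failed legal-ring interval. Since each move lowers $\langle\cdot,u\rangle$ by at least a fixed $\delta>0$, and the path must stay inside $\Lambda_n$, it has at most $Ln$ moves of type (b). If $k=Kn$ with $K\gg L$, then at least $(K-L)n$ steps are failed legal-ring intervals in $\xi$, at a chronological sequence of sites. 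The number of such paths is at most $\binom{Kn}{Ln}|\cU|^{Ln}\max_U|U|^{Ln}\leq e^{c(K,L)n}$, where $c(K,L)/K\to0$ as $K\to\infty$ with $L$ fixed. A union bound over paths and over the $n^d$ starting sites then gives the result, provided each fixed path's failure probability is at most $\theta^{(K-L)n}$ for some $\theta<1$.

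The key estimate, and the main obstacle, is bounding this failure probability for a fixed path. For a single site $x$, I will consider the stationary process killed at the first legal ring at $x$, with semigroup $P^{x}_t f=\bbE_\cdot[f(\xi(t))\1\{\text{no legal ring at }x\text{ in }[0,t]\}]$. I want to show $\|P^x_1\|_{L^2(\pi_n)\to L^2(\pi_n)}\leq\theta<1$ uniformly in $n$ and $x$. The killed generator is $\cL_n-c_x(1-\text{(resample }x))$, up to the convention for rejected legal rings. My plan is a Dirichlet-form comparison: the killed form is $\cD_n(f)+\pi_n(c_x f^2)$. If this is at most $\epsilon\|f\|^2$, the gap forces $\Var(f)\leq\epsilon\|f\|^2/\l$, so $f$ is close to the constant $\pi_n(f)$, and then $\pi_n(c_xf^2)\gtrsim\pi_n(c_x)\pi_n(f)^2$ is bounded below, a contradiction for small $\epsilon$. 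This yields a uniform decay rate $\gamma=\gamma(\l,\pi(c_x))>0$. Chaining then works through the Markov property: the failure events over successive intervals, each at its prescribed site, are products of killed semigroups $P^{x_1}_1P^{x_2}_1\cdots$ applied to $\1$ and integrated against $\pi_n$. Cauchy--Schwarz gives a bound of $\prod_j\|P^{x_j}_1\|\leq e^{-\gamma(K-L)n}$, with type-(b) steps contributing norm-one factors $P_1$. Choosing $K$ large makes the sum over paths $n^de^{(c(K,L)-\gamma(K-L))n}\to0$, which gives $t_{\mathrm{mix}}^{(n)}(\varepsilon)\leq Cn$. One care point is that the type-(b) steps should not condition on events; I would simply drop them and use the full semigroup there.
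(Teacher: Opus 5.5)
Your architecture matches the paper's: killed semigroup, chaining by the Markov property and Cauchy--Schwarz, monotone coupled sets, backward witness paths and a union bound. However, the rule defining $A_k$ is wrong as stated. You add $x$ once the sites of $x+U$, for a rule $U$ facilitating $x$ in $\xi$, are coupled and $\xi$ has a legal ring at $x$, and you then claim $x$ stays coupled. That only guarantees agreement right after that ring. At later rings, $c_x(\eta)$ and $c_x(\xi)$ can still differ through uncoupled sites of another rule.

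For example, take $\cU=\{\{-e_1\},\{-e_2\}\}$, with $x-e_1$ coupled and $x-e_2$ not. Suppose $x$ couples at a ring facilitated by $x-e_1$. A later ring at a time when $\xi_{x-e_1}=\eta_{x-e_1}=\xi_{x-e_2}=1$ but $\eta_{x-e_2}=0$ is legal only for $\eta$. Then $\eta_x$ adopts the coin toss while $\xi_x$ is frozen, so the processes can separate at $x$. Hence $A_{Kn}=\Lambda_n$ does not imply $\eta(t)=\xi(t)$, and your coupling inequality breaks.

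The repair is to require the whole dependency neighbourhood $\cN_x=(x+\cN)\cap\Lambda_n$ to lie in $A_{k-1}$, as in \eqref{eq:Wrecursion}. Then $c_x(\eta(t))=c_x(\xi(t))$ at all later times, and permanence follows by induction (Lemma \ref{lem:permanent}). The witness path then moves to any uncoupled site of $\cN_x$. Your count becomes $\sum_{m\le Ln}\binom{Kn}{m}N^m$, which is still $e^{c(K,L)n}$ with $c(K,L)/K\to0$. For the North-East model $|\cU|=1$, so there your rule coincides with the paper's.

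After this repair, two legitimate differences from the paper remain.

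\emph{Killed-semigroup contraction.} The paper bounds $\pi_n([Q^x_tf]^2)\le\bbP_{\pi_n}(\tau_x\ge t)/\min_\eta\pi_n(\eta)$. It then uses $\tau_x\le\tau_x^0\vee\tau_x^1$ and the reversible hitting-time bound, and applies this to an eigenfunction of $-H^x$, obtaining $\kappa=\frac12(p\wedge q)\l$. Your variational argument is more self-contained, but your generator is off. The correct one is $H^x=\sum_{y\ne x}\cL^y-c_x$, whose quadratic form is $\sum_{y\ne x}\pi_n(c_y\Var_y f)+\pi_n(c_xf^2)$, with $\Var_y$ the variance under $\pi_y$. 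This form dominates both $\cD_n(f)$ and $\pi_n(c_xf^2)$. So your contradiction argument runs unchanged, giving a rate depending on $\l$ and on $\inf_{n,x}\pi_n(c_x)\ge q^{N}$; the facilitating boundary makes this infimum uniform. The constant is worse than the paper's, but no hitting-time lemma is needed.

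\emph{Entropy versus energy.} The paper uses the crude count $|\Gamma_K(x)|\le(1+N)^K$ and compensates by taking the interval length $s$ large enough that $\kappa s>\log(1+N)$. You keep unit intervals and use that only $O(n)$ of the $Kn$ steps are moves, so the path entropy is $o(K)\,n$. Either works.
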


\begin{remark}
   Recall, a sequence of Markov chains is said to exhibit pre-cutoff if
   \[
   \sup_{\eps \in (0,1/2)}\limsup_{n \to \infty}\frac{t_{\mathrm{mix}}^{(n)}(\eps)}{t_{\mathrm{mix}}^{(n)}(1-\eps)} < \infty\,,
   \]
   see for example \cite{Levin2008}. Theorem \ref{th:main} shows that the ratio is bounded above by a constant $C^2$ independent of $\eps$.
\end{remark}

\begin{remark}
   The restriction to $q \geq q_c(\cU)$ is necessary for the result to hold in the North-East model. 
   Indeed, for $q<q_c(\cU)$ the North-East process has a relaxation time, and hence mixing time, which is exponentially large in $n$ \cite{CMRT}*{Theorem 6.16}.
   The same is expected to hold for general nontrivial oriented models.
   A lower bound on the relaxation time should follow by adapting the arguments in \cite{CMRT}, since there exist macroscopic stable regions under the stationary distribution by comparison with the associated bootstrap percolation process \cite{HTbook}*{Chapter 7}.
   The critical case, $q=q_c(\cU)$, remains an open problem.
\end{remark}

\section{Proof of Theorem \ref{th:main}}

\subsection{Oriented KCM}
Recall the update family $\cU$ is a finite non-empty collection of finite non-empty subsets of $\bbZ^d\setminus\{0\}$, i.e.
\[
   \cU=\{U_1,\ldots,U_m\},
   \qquad \varnothing\ne U_i\subset \bbZ^d\setminus\{0\}\,.
\]
Write
\[
   \cN:=\bigcup_{U\in\cU}U,
   \qquad N:=|\cN|.
\]
The set $\cN$ is the full dependency neighbourhood. 

\begin{definition}[Oriented KCM]
\label{def:oriented}
The update family $\cU$ is \emph{oriented}, and the corresponding KCM is \emph{oriented}, if there exists a direction $v\in S^{d-1}$ such that $\cup_{U\in\cU}U \subset \bbH_v$. 
Equivalently, there is a $v\in S^{d-1}$ such that
\[
       \ip{u}{v}<0 \qquad\text{for every }u\in\cN.
\]
For one such $v$, set
\[
   \delta_v:=\min_{u\in\cN}\bigl(-\ip{u}{v}\bigr)>0.
\]
\end{definition}
Define the \emph{height} of a site $x\in\bbZ^d$ in direction $v$ by $h(x):=\ip{x}{v}$. 
The direction $v$ will be fixed throughout given the update family $\cU$, and hence we will not include it in the notation for the height.

\subsection{The killed process}

Fix $x \in \L_n$ and an initial configuration $\eta \in \O_n$, let $(\eta(t))_{t\ge 0}$ be the $\cU$-KCM dynamics on $\O_n$ started from $\eta$.
Define the \emph{first legal ring} at $x$ by
\[
   \tau_x:=\inf\{t\ge 0\,:\, t \in T_x \text{ and } c_x(\eta(t^-))=1\}.
\]
For $t\ge 0$, define the process killed at the first legal ring at $x$ by the substochastic semigroup
\begin{equation}\label{eq:Qdef}
     (Q_t^x f)(\eta):=\E_\eta\bigl[f(\eta(t))\1_{\{\tau_x\ge t\}}\bigr].
\end{equation}
For $y\in\Lambda_n$, write
\[
      \cL^y f:=c_y(\pi_yf-f),
      \qquad \textrm{so} \ \ \cL_n=\sum_{x\in \L_n}\cL^x.
\]
Since the state space is finite, it is straightforward to check that the killed process is generated on $L^2(\pi_n)$ by the self-adjoint operator
\begin{align}\label{eq:Hx}
             H^x:=\cL_n - \cL^x - c_x = \sum_{y\ne x}\cL^y-c_x\,.
\end{align}
In particular, $Q_t^x=e^{t H^x}$.



\begin{proposition}[Contraction of the killed-process]\label{prop:killed-contraction}
The lowest eigenvalue of $-H^x$ is uniformly positive. In particular, it is not smaller than $\kappa := \frac{1}{2}( p \wedge q) \l > 0$. Hence, for all $f \in L^2(\pi_n)$ and $t\ge 0$,
\begin{equation}\label{eq:coercivity}
       -\ip{f}{H^x f}_{\pi_n}\ge \kappa\|f\|_{2,\pi_n}^2,
       \qquad
       \|Q_t^x\|_{2\to2}\le e^{-\kappa t}.
\end{equation}
\end{proposition}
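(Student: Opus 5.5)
The plan is to prove the quadratic-form bound $-\ip{f}{H^x f}_{\pi_n}\ge \kappa\|f\|_{2,\pi_n}^2$ directly. The semigroup bound then follows: $H^x$ is self-adjoint on the finite-dimensional space $L^2(\pi_n)$, so $\|e^{tH^x}\|_{2\to 2}=e^{-t\,\lambda_{\min}(-H^x)}\le e^{-\kappa t}$. Write $\cD^y(f):=-\pi_n(f\cL^y f)=\pi_n\bigl(c_y\Var_y(f)\bigr)$. Since $c_x$ does not depend on $\eta_x$ and $\pi_n$ is a product measure, each $\cL^y$ is self-adjoint, and
\[
 -\ip{f}{H^x f}_{\pi_n}=\sum_{y\neq x}\cD^y(f)+\pi_n(c_xf^2).
\]
The first step is to absorb the missing term $\cD^x(f)$. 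Because $c_x$ is $\eta_x$-independent, $\cD^x(f)=\pi_n\bigl(c_x\Var_x(f)\bigr)\le \pi_n\bigl(c_x\pi_x(f^2)\bigr)=\pi_n(c_xf^2)$. Splitting $\pi_n(c_xf^2)$ in half therefore gives
\[
 -\ip{f}{H^x f}_{\pi_n}\ge \tfrac12\cD_n(f)+\tfrac12\pi_n(c_xf^2)\ge \tfrac{\l}{2}\Var_{\pi_n}(f)+\tfrac12\pi_n(c_xf^2),
\]
using $\l^{(n)}\ge\l$ from the Remark after Definition~\ref{gap}.

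The second, and main, step is to control the mean $\pi_n(f)^2$, since $\|f\|_2^2=\Var_{\pi_n}(f)+\pi_n(f)^2$. The killing term alone does not suffice, because $c_x$ may vanish on a large set. To avoid constants depending on $\pi_n(c_x)$, I would not compare with $\pi_n(c_x)$ directly. Instead I would apply the spectral gap to an auxiliary function that "sees" the killing. The first thing I would try is an enlarged system: add a fictitious site $x'$, independent and $\textrm{Ber}(p)$, attached to $x$, and set $F(\eta,\eta_{x'}):=f(\eta)\1_{\{\eta_{x'}=1\}}$ (or the analogous function with $\eta_x$ itself playing this role). The aim is to apply the variational principle \eqref{eq:5} to $F$. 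Its variance is of order $(p\wedge q)\|f\|_2^2$, since the indicator produces $\Var(F)\ge pq\,\pi_n(f^2)$-type contributions. Its Dirichlet form splits into $\sum_{y\ne x}\cD^y(f)$-type terms plus a term at the special site bounded by $\pi_n(c_xf^2)$. Comparing then gives $(p\wedge q)\,\l\,\|f\|_2^2\lesssim \sum_{y\neq x}\cD^y(f)+\pi_n(c_xf^2)$, and the factor $\tfrac12$ from the first step yields $\kappa=\tfrac12(p\wedge q)\l$.

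The technical care goes into checking that the Dirichlet form of $F$ at site $x$ is indeed dominated by $\pi_n(c_x f^2)$: the variance of $F$ in $\eta_x$ under the constraint $c_x$ is bounded by $\pi_x(f^2)$ times $c_x$. One must also check that replacing $f$ by $F$ does not change the constraints at other sites. This holds because oriented constraints for $y\neq x$ only involve configurations, and multiplying by an indicator at a site outside their dependency can only be controlled by $\cD^y(f)$, using $\Var_y(fg)=g^2\Var_y(f)$ when $g$ does not depend on $\eta_y$. If this construction fails to close, the fallback is the cruder bound
\[
 \pi_n(f)^2\le 2\Var_{\pi_n}(f)\,\pi_n(c_x)^{-1}+2\pi_n(c_xf^2)\,\pi_n(c_x)^{-1},
\]
which follows from $\pi_n(c_x)\pi_n(f)=\pi_n\bigl(c_x f\bigr)-\pi_n\bigl(c_x(f-\pi_n f)\bigr)$ and Cauchy--Schwarz, together with $\pi_n(c_x)\ge \min_U q^{|U|}>0$ uniformly in $n$ and $x$. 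This still gives a uniformly positive (if worse) $\kappa$, which is all that is needed downstream.
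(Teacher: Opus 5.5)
\textbf{There is a gap: the proposal establishes uniform positivity of $-H^x$, but not the stated constant $\kappa=\frac12(p\wedge q)\l$.}

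Your first step is correct, and so is your fallback. The fallback gives a lower bound on $-H^x$ that is uniform in $n$ and $x$, with a constant of order $\l\,\pi_n(c_x)$. That is all Lemma~\ref{lem:failure-string} and the later choice of $s$ require. But the route you propose for the constant $\kappa$ does not work as written:
\begin{itemize}
\item With a fictitious site $x'$, the variational principle \eqref{eq:5} is not available. It concerns $\cL_n$ on $\O_n$, whereas $F$ lives on an enlarged chain whose gap is unknown. On functions $f(\eta)\bigl(\1_{\{\eta_{x'}=1\}}-p\bigr)$ the enlarged generator acts as $\cL_n-c_x$, so bounding its gap is the original problem again.
\item With $\eta_x$ itself and a single indicator, $F=f\1_{\{\eta_x=1\}}$, the claimed bound $\Var_{\pi_n}(F)\gtrsim (p\wedge q)\|f\|_{2,\pi_n}^2$ is false. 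Taking $f=\1_{\{\eta_x=0\}}$ gives $F\equiv 0$.
\item The remark that the factor $\frac12$ from your first step ``yields $\kappa$'' is not an argument. If your second step held, the first would be superfluous.
\end{itemize}

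The missing idea is to use both indicators. Write $f=f_0+f_1$ with $f_i:=f\1_{\{\eta_x=i\}}$.
\begin{itemize}
\item For $y\neq x$ the indicator does not depend on $\eta_y$, so $\Var_y(f_i)=\1_{\{\eta_x=i\}}\Var_y(f)$ and hence $\cD^y(f_0)+\cD^y(f_1)=\cD^y(f)$.
\item At $x$ one computes $\Var_x(f_0)+\Var_x(f_1)=pq\bigl[f(\eta^{x\leftarrow 0})^2+f(\eta^{x\leftarrow 1})^2\bigr]\le \pi_x(f^2)$, so $\cD^x(f_0)+\cD^x(f_1)\le \pi_n(c_xf^2)$.
\item Together these give $\cD_n(f_0)+\cD_n(f_1)\le -\ip{f}{H^xf}_{\pi_n}$.
\item Since $f_0$ vanishes on $\{\eta_x=1\}$, Cauchy--Schwarz gives $\pi_n(f_0)^2\le q\,\pi_n(f_0^2)$, that is, $\Var_{\pi_n}(f_0)\ge p\,\pi_n(f_0^2)$. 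Likewise $\Var_{\pi_n}(f_1)\ge q\,\pi_n(f_1^2)$.
\item Applying \eqref{eq:5} to $f_0$ and $f_1$ then gives $-\ip{f}{H^xf}_{\pi_n}\ge (p\wedge q)\l\,\|f\|_{2,\pi_n}^2=2\kappa\,\|f\|_{2,\pi_n}^2$.
\end{itemize}
This proves the proposition with a factor $2$ to spare.

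This is the variational counterpart of the paper's proof. The paper takes an eigenfunction and bounds $\pi_n([Q^x_tf]^2)$ by $\max_\eta\bbP_\eta(\tau_x\ge t)$ via Cauchy--Schwarz. It then dominates $\tau_x$ by the hitting times of $\{\eta_x=1\}$ and $\{\eta_x=0\}$. Finally it uses $\bbP_{\pi_n}(\tau_A\ge t)\le e^{-\l\pi_n(A)t}$, which is the probabilistic form of $\Var_{\pi_n}(g)\ge \pi_n(A)\,\pi_n(g^2)$ for $g$ vanishing on $A$. The squaring in the paper's Cauchy--Schwarz step is what costs it the factor $\frac12$.
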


\begin{proof}

   By homogeneity, fix an eigenfunction $f$ of $-H^x$, such that $\|f\|_{2,\pi_n} = 1$.
   Applying Cauchy-Schwarz to $[Q_t^x f(\eta)]^2$ gives
   \begin{align*}
      \pi_n\left( [Q_t^x f]^2 \right) \leq \max_\eta \bbP_{\eta}(\tau_x\ge t)\leq \frac{1}{\min_{\eta}\pi_n(\eta)}\bbP_{\pi_n}(\tau_x\ge t)\,,
   \end{align*}
   where, in the first inequality, we used stationarity of $\pi_n$.
For $i\in\{0,1\}$, let $\tau_x^{i}$ be the hitting time of the set $\{\h\,:\, \h_x = i\}$,  and
observe that $\tau_x \leq \tau_x^0 \vee \tau_x^1$.
Thus, for any $t\ge 0$,
\begin{align*}
   \bbP_{\pi_n}(\tau_x\ge t) 
   &\le \bbP_{\pi_n}(\tau_x^0\ge t) + \bbP_{\pi_n}(\tau_x^1\ge t).
\end{align*}
By a standard result for reversible Markov chains, see for example \cite{Aldous-Fill}*{Lemma 3.35}, we have
\begin{align*}
   \bbP_{\pi_n}(\tau_x^1\ge t) \le e^{-\lambda^{(n)}\, p\, t}, \qquad \textrm{and} \qquad \bbP_{\pi_n}(\tau_x^0\ge t) \le e^{-\lambda^{(n)}\, q\, t}.
\end{align*}
Hence,
\begin{align*}
   \liminf_{t\to\infty} -\frac{1}{t}\log \pi_n\left( [Q_t^x f]^2 \right) \ge  ( p \wedge q) \l^{(n)} \ge ( p \wedge q) \l >0.
\end{align*}
Since $f$ is chosen to be an eigenfunction of $-H^x$, we have $Q_t^x f = e^{-t \mu} f$ for some eigenvalue $\mu$ of $-H^x$. 
Hence, the lowest eigenvalue of $-H^x$ is at least $\kappa = \frac{1}{2}( p \wedge q) \l >0$. 
\end{proof}

The legal rings at different sites are not independent under the stationary process.
However, since the killed process is a contraction in $L^2(\pi_n)$, it gives us control over the probability of observing no legal rings across an arbitrary collection of sites and time intervals.
This is made precise in the following lemma.

\begin{lemma}\label{lem:failure-string}
Fix $x_1,\ldots,x_r\in\Lambda_n$ (repeated sites are allowed) and a sequence of non-overlapping time intervals
\[
       0\le s_1<t_1\le s_2<t_2\le\cdots\le s_r<t_r\,,
\]
then
\begin{equation}\label{eq:failure-string}
       \bbP_{\pi_n}\left(\bigcap_{j=1}^r \{\text{no legal ring at $x_j$ during }[s_j,t_j)\}\right)
          \le \exp\left\{-\kappa\sum_{j=1}^r(t_j-s_j)\right\}.
\end{equation}
\end{lemma}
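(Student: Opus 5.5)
We plan to write the probability as an iterated expectation that alternates the unkilled semigroup $P_t=e^{t\cL_n}$ on the gaps with the killed semigroup $Q^{x_j}_{t_j-s_j}$ on the intervals, and then bound the result in $L^2(\pi_n)$ using Proposition~\ref{prop:killed-contraction}.

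First, we use the Markov property of the graphical construction. The Poisson clocks and coins on disjoint time intervals are independent, and the event ``no legal ring at $x$ during $[s,t)$'' depends only on $\eta(s)$ and on the graphical construction in $[s,t)$. Conditioning successively at the times $s_1,t_1,\dots,s_r$, we get
\begin{align*}
 \bbP_{\pi_n}\Bigl(\bigcap_{j=1}^r\{\cdots\}\Bigr)
 = \pi_n\Bigl(P_{s_1}Q^{x_1}_{t_1-s_1}P_{s_2-t_1}Q^{x_2}_{t_2-s_2}\cdots P_{s_r-t_{r-1}}Q^{x_r}_{t_r-s_r}\1\Bigr).
\end{align*}
One subtlety needs checking. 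The definition \eqref{eq:Qdef} uses the event $\{\tau_x\ge t\}$. The event ``no legal ring in $[s,t)$'' corresponds, after shifting time by $s$, to $\{\tau_x\ge t-s\}$ up to the null event of a ring exactly at time $t-s$. So the two agree almost surely. We also need that $Q^x_t$ is indeed the semigroup $e^{tH^x}$ as stated before the proposition, which we take as given.

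Second, we bound the right-hand side using stationarity and operator norms. Since $\pi_n P_s=\pi_n$, the leading $P_{s_1}$ can be dropped. Then Cauchy--Schwarz gives
\[
\pi_n(g)\le \|g\|_{2,\pi_n}.
\]
Next we use the norm bounds: $\|P_s\|_{2\to2}\le 1$, since $P_s$ is a Markov semigroup reversible with respect to $\pi_n$, and $\|Q^{x_j}_{t_j-s_j}\|_{2\to2}\le e^{-\kappa(t_j-s_j)}$ by \eqref{eq:coercivity}. Finally $\|\1\|_{2,\pi_n}=1$. Multiplying these together yields \eqref{eq:failure-string}.

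The main obstacle is justifying the product formula rigorously. We must show that conditioning on $\mathcal F_{s_j}$ turns the $j$-th event into the application of $Q^{x_j}$ to the future expectation evaluated at $\eta(s_j)$. This is the strong (here, ordinary) Markov property of the graphical construction. We would prove it by induction on $r$, peeling off the last interval first. The key points are independence of Poisson increments and that the indicator of no legal ring on $[s_r,t_r)$ is a functional of $\eta(s_r)$ and the future noise. Repeated sites and adjacent intervals ($t_j=s_{j+1}$) cause no problem, since in that case $P_0=\mathrm{Id}$.
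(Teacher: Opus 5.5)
Your proposal is correct and is essentially the paper's proof. The paper also applies the Markov property at the deterministic endpoints to write the probability as $\langle \mathbf{1}, P_{s_1}Q^{x_1}_{t_1-s_1}P_{s_2-t_1}\cdots P_{s_r-t_{r-1}}Q^{x_r}_{t_r-s_r}\mathbf{1}\rangle_{\pi_n}$, and then concludes with Cauchy--Schwarz, $\|P_t\|_{2\to2}\le 1$, and $\|Q^x_t\|_{2\to2}\le e^{-\kappa t}$ from Proposition~\ref{prop:killed-contraction}. One small point: $\{\tau_x\ge t-s\}$ is exactly the event that no legal ring occurs in $[0,t-s)$, since an infimum is at least $a$ precisely when every element is, so your almost-sure caveat is unnecessary, though harmless.
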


\begin{proof}
Let $P_t=e^{t \cL_n}$. The Markov property at the deterministic interval endpoints, and the definition of the killed semigroup give,
\begin{align*}
 \bbP_{\pi_n}\left(\bigcap_{j=1}^r\{\text{there is no legal ring at $x_j$ during }[s_j,t_j)\} \right) = \\
  =\ip{\mathbf{1}}{P_{s_1}Q_{t_1-s_1}^{x_1}P_{s_2-t_1}Q_{t_2-s_2}^{x_2}
       \cdots P_{s_r-t_{r-1}}Q_{t_r-s_r}^{x_r}\mathbf{1}}_{\pi_n}.
\end{align*}
Since $\|P_t\|_{2\to2}\le1$, and Proposition \ref{prop:killed-contraction} gives $\|Q_t^x\|_{2\to2}\le e^{-\kappa t}$, Cauchy--Schwarz and submultiplicativity yield \eqref{eq:failure-string}.
\end{proof}

\subsection{Monotone sequence of coupled sites}

The oriented nature of the update family allows us to define a monotone set of sites which have coupled with the stationary process, uniformly in the initial condition under the graphical construction. 

Let $\eta=\eta(0)$ be an arbitrary initial configuration and $(\eta(t))_{t\geq 0}$ the oriented KCM started from $\eta$. Recall the graphical construction of the process and the orientation $v$ in Definition \ref{def:oriented}. Notice that for each fixed site $x\in\Lambda_n$, the restriction $(\eta_{x+\bbH_v}(t))_{t\geq 0}$, of the process to the half-space $(x+\bbH_v)\cap\L_n$, is independent of the initial configuration and clock rings and coin tosses outside of $(x+\bbH_v)\cap\L_n$ (see \cite{HTbook}*{Proposition 7.3}). 
This is because the constraint at $x$ or `below' it (with respect to the direction $v$) depends only on the configuration `below' $x$.

Let $\sigma \sim \pi_n$, independently of the graphical construction, and $(\sigma(t))_{t\geq 0}$ be the associated stationary process. 
Fix a time step $s>0$ and let
\[
                  I_k:=[ks,(k+1)s),\qquad k=0,1,2,\ldots.
\]
For $x\in\Lambda_n$, denote the dependency neighbourhood of $x$ by
\[
             \cN_x:=(x+\cN)\cap\Lambda_n.
\]
All exterior sites in $\bbZ^d\setminus\Lambda_n$ are regarded as permanently coupled and facilitating. Starting from $G_0=\varnothing$, define
\begin{equation}\label{eq:Wrecursion}
 G_{k+1}:=G_k\cup
   \left\{x\in\Lambda_n:
       \cN_x\subseteq G_k\ \text{and $(\sigma(t))_{t\in I_k}$ has a legal ring at $x$ during $I_k$}
   \right\}.
\end{equation}
This defines an increasing sequence of sets of sites $G_k$ which are coupled with the stationary process after time $ks$. The following lemma shows that once a site is coupled, it remains coupled forever.

\begin{lemma}[Permanently coupled increasing sequence]\label{lem:permanent}
Suppose $\sigma \sim \pi_n$ independently of the graphical construction\footnote{To simplify notation, we include this initial independent configuration in $\bbP$.}. 
For each initial configuration $\eta \in \O_n$ let $(\eta(t))_{t\geq 0}$ and $(\sigma(t))_{t\geq 0}$ be driven by the same graphical construction. Then, for every $k\ge0$:
\begin{enumerate}
 \item $G_k\subseteq G_{k+1}$;
 \item if $x\in G_k\,$, then $\cN_x\subseteq G_k$;
 \item if $x\in G_k\,$, then $\eta_x(t)=\sigma_x(t)$ for every $t\ge ks$.
\end{enumerate}
\end{lemma}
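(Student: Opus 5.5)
Item (1) is immediate from the recursion \eqref{eq:Wrecursion}, since $G_{k+1}$ is defined as $G_k$ union a further set. Item (2) I will prove by induction on $k$. For $k=0$ it is vacuous because $G_0=\varnothing$. Assume it holds for $G_k$ and take $x\in G_{k+1}$. If $x\in G_k$, then the inductive hypothesis gives $\cN_x\subseteq G_k\subseteq G_{k+1}$. Otherwise $x$ was added at step $k+1$, and the defining condition requires $\cN_x\subseteq G_k\subseteq G_{k+1}$. Note that sites outside $\Lambda_n$ are never an issue: $\cN_x$ is intersected with $\Lambda_n$, and exterior sites are fixed equal to $0$ in both processes.

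Item (3) is also by induction on $k$, and it uses (2). The case $k=0$ is vacuous. Suppose (3) holds for $G_k$ and let $x\in G_{k+1}$. If $x\in G_k$ we are done, since $(k+1)s\ge ks$. So suppose $x\in G_{k+1}\setminus G_k$. Then $\cN_x\subseteq G_k$, and by the inductive hypothesis $\eta_y(t)=\sigma_y(t)$ for all $y\in\cN_x$ and all $t\ge ks$. Exterior sites are equal to $0$ in both configurations. Since $c_x$ depends only on the spins on $x+\cN$, we get $c_x(\eta(t^-))=c_x(\sigma(t^-))$ for all $t>ks$.

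Now let $t^*\in I_k\cap T_x$ be a time at which $\sigma$ has a legal ring at $x$. This is also a legal ring for $\eta$, and both processes set their spin at $x$ to the same coin $S_x(t^*)$. So $\eta_x(t^*)=\sigma_x(t^*)$.

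It remains to propagate the agreement at $x$ to all $t\ge t^*$, in particular to all $t\ge (k+1)s$. At each later ring of $T_x$ the constraints coincide, because $\cN_x\subseteq G_k$ and the hypothesis holds for all $t\ge ks$. Hence either both processes update to the same coin, or both stay frozen at their common current value. Between rings of $T_x$ the spin at $x$ does not change in either process. Since $T_x$ is a.s. locally finite, induction over the successive rings after $t^*$ gives $\eta_x(t)=\sigma_x(t)$ for all $t\ge t^*$, and hence for all $t\ge(k+1)s$.

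The main subtlety is the order of quantifiers in the induction. To handle the rings of $x$ after time $(k+1)s$, we need agreement on $\cN_x$ for all $t\ge ks$, not just during $I_k$. This is exactly what the inductive statement (3) for $G_k$ provides, and it is why (3) is stated "for every $t\ge ks$." Orientation is not needed for this lemma beyond the fact that $c_x$ does not depend on $\eta_x$. The process is well defined because $\Lambda_n$ is finite, so rings can be ordered a.s.
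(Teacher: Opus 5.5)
Your proof is correct and follows essentially the same induction as the paper: (2) comes from the defining condition $\cN_x\subseteq G_k$, and (3) uses agreement on $\cN_x$ for all $t\ge ks$ to make the first legal $\sigma$-ring at $x$ in $I_k$ legal for $\eta$ too, then propagates equality ring by ring. You are slightly more explicit than the paper (the case $x\in G_k$, and the need for agreement on $\cN_x$ beyond $I_k$); the one point to tidy is that your constraint identity holds only for $t>ks$, so note that almost surely $T_x$ has no point at the deterministic time $ks$, hence $t^*>ks$.
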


\begin{proof}
$(1)$ is immediate by construction.
$(2)$ follows by induction; assume it holds for $G_k$, if $x \in G_{k+1}\setminus G_k$, then $\cN_x \subseteq G_k\subseteq G_{k+1}$.
 For $(3)$, fix $\eta \in \O_n$ an arbitrary initial configuration, suppose $(3)$ holds for $G_k$ and fix $x\in G_{k+1}\setminus G_k$. 
 By construction, $\cN_x\subseteq G_k$, so every site $y\in\cN_x$ satisfies $\eta_y(t)=\sigma_y(t)$ for every $t\ge ks$.
 Therefore, the constraints under $(\eta(t))_{t\geq 0}$ and $(\sigma(t))_{t\geq 0}$ at $x$ agree throughout $I_k$. 
 At the first legal ring of $(\sigma(t))_{t\geq 0}$ at $x$ in $I_k$, the same ring is legal for $(\eta(t))_{t\geq 0}$, and the two chains adopt the same spin at $x$. 
 All subsequent rings at $x$ are simultaneously legal or not in both chains, so equality at $x$ is conserved. 
 Sites already in $G_k$ remain coupled by induction.
\end{proof}

\subsection{Witnesses to failure}
\label{sec:witness}

Fix $x \in \L_n$, and suppose it has not coupled with the stationary process after $K>0$ time intervals. 
We define backwards witness paths which explain why $x$ has not coupled with the stationary process.
Each step backwards in the path either chooses a site in the dependency neighbourhood which had not yet coupled by the end of the previous time interval, or, if no such site exists, then stays at the same site.
If the path stays at the same site, then to avoid coupling, there must have been no legal ring in the stationary process at the site during the current time interval.
By the contraction of the killed process, the probability that there are many such ``stays'' is small.
Also, each move in the path must decrease the height by at least $\delta_v$.
Hence, the total number of moves is at most order $n\|v\|_1$, and the total number of stays is at least order $K-\textrm{const}\,n\|v\|_1 $.
Choosing $K$ a large multiple of $n$ completes the proof of the upper bound on the mixing time.

For $K\ge1$ and $x\in\Lambda_n$, let $\Gamma_K(x)$ be the set of deterministic sequences
\[
          \gamma=(z_0,\ldots,z_K),\qquad \textrm{with} \ \ z_K=x,
\]
such that for $0\le j<K$,
\[
      z_j=z_{j+1}
      \quad\text{or}\quad
      z_j=z_{j+1}+u\in\Lambda_n\text{ for some }u\in\cN.
\]
Let
\[
       S(\gamma):=\{j\in\{0,\ldots,K-1\}:z_j=z_{j+1}\}
\]
be the set of \emph{stays}  and call the steps $i \in \{0,\ldots,K-1\}\setminus S(\gamma)$ \emph{spatial moves}.  
Define
\begin{equation}\label{eq:Fgamma}
       F_\gamma:=\bigcap_{j\in S(\gamma)}
          \{\text{there is no legal ring at $z_{j+1}$ during $I_j$ under $(\sigma(t))_{t\geq 0}$}\}.
\end{equation}

Recall the orientation vector $v$ and $\delta_v = \min_{u\in \cN}(-\ip{u}{v})$ in Definition \ref{def:oriented}. 
For $z \in \L_n$, define the \emph{height} $h(z):=\ip{z}{v}$. 
Fix $x \in \L_n$ and $y \in \cN_x$, then $h(y) \le h(x) - \delta_v$. 
The height of the box is at most $n\|v\|_1$, hence every path in $\Gamma_K(x)$ has at most
\begin{equation}\label{eq:Rn}
       R_n:=\left\lceil\frac{n\|v\|_1}{\delta_v}\right\rceil \leq \rho_v n
\end{equation}
spatial moves, where $\rho_v = 1 + \|v\|_1/\delta_v$.

If $x\notin G_K$, then there exists a witness path $\gamma \in \Gamma_K(x)$ such that the corresponding failure event $F_\gamma$ must occur under the stationary process.
\begin{lemma}\label{lem:witness}
For every $K\ge1$ and $x\in\Lambda_n$,
\begin{equation}\label{eq:witness-inclusion}
             \{x\notin G_K\}\subseteq
                \bigcup_{\gamma\in\Gamma_K(x)}F_\gamma.
\end{equation}
Moreover every $\gamma\in\Gamma_K(x)$ has at most $R_n$ spatial moves and therefore at least $K-R_n$ stays.
\end{lemma}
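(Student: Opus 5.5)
The plan is to build the witness path backwards in time, from $z_K=x$ down to $z_0$, keeping the invariant $z_{j}\notin G_{j}$ for every $j$. This holds at $j=K$ by assumption.

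Suppose $z_{j+1}\notin G_{j+1}$ for some $0\le j<K$. By Lemma \ref{lem:permanent}(1) we also have $z_{j+1}\notin G_j$. The recursion \eqref{eq:Wrecursion} then leaves two possibilities. If $\cN_{z_{j+1}}\not\subseteq G_j$, we choose some $y\in\cN_{z_{j+1}}\setminus G_j$ and set $z_j:=y$. Then $z_j=z_{j+1}+u\in\Lambda_n$ for some $u\in\cN$, and $z_j\notin G_j$, so this is a spatial move. Otherwise $\cN_{z_{j+1}}\subseteq G_j$, and since $z_{j+1}\notin G_{j+1}$, the stationary process $(\sigma(t))$ must have had no legal ring at $z_{j+1}$ during $I_j$. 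In this case we set $z_j:=z_{j+1}$, which is a stay, and $z_j\notin G_j$ still holds. Exterior sites are treated as permanently coupled, so every chosen $y$ really lies in $\Lambda_n$, consistent with the definition of $\cN_x$.

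The resulting sequence $\gamma=(z_0,\dots,z_K)$ belongs to $\Gamma_K(x)$. At every stay index $j\in S(\gamma)$ we have recorded that there is no legal ring at $z_{j+1}$ during $I_j$, which is exactly the event in \eqref{eq:Fgamma}. Hence $F_\gamma$ occurs, and this proves \eqref{eq:witness-inclusion}. One subtlety needs care: a step where $\cN_{z_{j+1}}\not\subseteq G_j$ must always be taken as a move and never as a stay, because in that case we have no information about legal rings. The construction handles this by case analysis, and the path is a deterministic function of the realization.

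For the counting statement, fix $\gamma\in\Gamma_K(x)$. Each spatial move changes the height by $h(z_j)-h(z_{j+1})=\ip{u}{v}\le-\delta_v$. All points $z_j$ lie in $\Lambda_n$, whose height range $\max h-\min h$ is at most $(n-1)\|v\|_1\le n\|v\|_1$. Heights never increase along the path, since stays leave the height unchanged. So $m$ spatial moves require $m\delta_v\le n\|v\|_1$, giving $m\le R_n$. The number of stays is then $K-m\ge K-R_n$.

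I do not expect a real obstacle. The only points needing attention are the monotonicity used to pass from $G_{j+1}$ to $G_j$, and the correct treatment of the boundary.
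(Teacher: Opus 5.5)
Your proposal is correct and follows the paper's proof essentially step for step. It builds the path backwards while maintaining $z_j\notin G_j$, moves to an uncoupled neighbour in $\cN_{z_{j+1}}$ when one exists, and otherwise stays, which forces the absence of a stationary legal ring at $z_{j+1}$ during $I_j$; the move bound then comes from the height decreasing by at least $\delta_v$ per spatial move. The only cosmetic difference is that the paper fixes an ordering of $\cN$ to make the choice of $y$ canonical. Your claim that the path is a deterministic function of the realization implicitly needs such a rule, but the inclusion itself only needs existence.
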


\begin{proof}
Fix an outcome with $x\notin G_K$ and set $z_K=x$. 
We will construct a witness path $\gamma=(z_0,\ldots,z_K)\in\Gamma_K(x)$ backwards in time such that the corresponding failure event $F_\gamma$ occurs.
We proceed by induction.
Fix $j+1 \in \{1,\ldots,K\}$ and suppose $z_{j+1}\notin G_{j+1}$.
Since $G_j\subseteq G_{j+1}$, also $z_{j+1}\notin G_j$. Fix once and for all an ordering of $\cN$. 
If there is $u\in\cN$ such that $z_{j+1}+u\in\Lambda_n\setminus G_j$, choose the first such $u$ and set $z_j=z_{j+1}+u$. If no such move exists, then $\cN_{z_{j+1}}\subseteq G_j$. 
Since $z_{j+1}\notin G_{j+1}$, recursion \eqref{eq:Wrecursion} forces the absence of a legal ring at $z_{j+1}$ during $I_j$; set $z_j=z_{j+1}$. Thus each stay creates the corresponding failure event, proving \eqref{eq:witness-inclusion}. The move bound follows from \eqref{eq:Rn} and the preceding argument.
\end{proof}

\begin{proposition}\label{prop:incomplete}
For every $K\ge R_n$,
\begin{equation}\label{eq:incomplete}
       \bbP(G_K\ne\Lambda_n)
          \le n^d (1+N)^K\exp\{-\kappa s(K-R_n)\}.
\end{equation}
\end{proposition}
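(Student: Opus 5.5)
The plan is a union bound over sites $x\in\Lambda_n$ and over witness paths $\gamma\in\Gamma_K(x)$, followed by Lemma \ref{lem:failure-string} applied to each failure event $F_\gamma$.

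First, write $\{G_K\ne\Lambda_n\}=\bigcup_{x\in\Lambda_n}\{x\notin G_K\}$. Lemma \ref{lem:witness} then gives
\begin{align*}
\bbP(G_K\ne\Lambda_n)\le\sum_{x\in\Lambda_n}\sum_{\gamma\in\Gamma_K(x)}\bbP(F_\gamma).
\end{align*}
The number of sites is $n^d$.

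Second, count the witness paths. Each $\gamma\in\Gamma_K(x)$ is determined by its endpoint $z_K=x$ together with $K$ backward steps. Each step is either a stay or a move by one of the $N$ vectors $u\in\cN$, so $|\Gamma_K(x)|\le(1+N)^K$.

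Third, bound $\bbP(F_\gamma)$ for a fixed path. Since $\sigma\sim\pi_n$ and $\sigma(t)$ is driven by the graphical construction, $(\sigma(t))_{t\ge0}$ is the stationary process with law $\bbP_{\pi_n}$. The event $F_\gamma$ is an intersection, over $j\in S(\gamma)$, of the events ``no legal ring at $z_{j+1}$ during $I_j$''. The intervals $I_j=[js,(j+1)s)$ for distinct $j$ are disjoint and chronologically ordered, with $s_j=js$ and $t_j=(j+1)s$. Repeated sites are allowed in Lemma \ref{lem:failure-string}, so it applies directly and gives
\begin{align*}
\bbP(F_\gamma)\le\exp\{-\kappa s|S(\gamma)|\}.
\end{align*}
By Lemma \ref{lem:witness}, $|S(\gamma)|\ge K-R_n\ge0$ whenever $K\ge R_n$, so $\bbP(F_\gamma)\le e^{-\kappa s(K-R_n)}$. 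Multiplying the three factors gives \eqref{eq:incomplete}.

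The only point requiring care is the third step. The failure events must refer to the stationary process alone, and not to the coupled process started from $\eta$, which they do by construction. We also need the intervals indexed by the stays to be non-overlapping in the sense required by Lemma \ref{lem:failure-string}; after ordering them by $j$, this is immediate. Everything else is counting.
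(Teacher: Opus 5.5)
Your proposal is correct and is essentially the paper's proof. Both take a union bound over the $n^d$ sites and the at most $(1+N)^K$ witness paths from Lemma \ref{lem:witness}. Both then apply Lemma \ref{lem:failure-string} to the stay intervals $I_j$ to get $\bbP(F_\gamma)\le e^{-\kappa s|S(\gamma)|}\le e^{-\kappa s(K-R_n)}$.
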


\begin{proof}
For a fixed $x\in \Lambda_n$, Lemma \ref{lem:witness} and a union bound give
\[
      \bbP(x\notin G_K)\le\sum_{\gamma\in\Gamma_K(x)}\bbP(F_\gamma).
\]
At each backward step there is one stay choice and at most $|\cN|$ move choices, so $|\Gamma_K(x)|\le (1+|\cN|)^K$. For a fixed candidate path, the sequence of sites and time intervals in \eqref{eq:Fgamma} are deterministic. By Lemma \ref{lem:failure-string},
\[
        \bbP(F_\gamma)\le e^{-\kappa s|S(\gamma)|}
             \le e^{-\kappa s(K-R_n)}.
\]
A union bound over the $n^d$ sites in $\Lambda_n$ proves \eqref{eq:incomplete}.
\end{proof}

\subsection{Proof of the upper bound}

If $G_K=\Lambda_n$, Lemma \ref{lem:permanent} implies that for each initial configuration $\eta$, we have $\eta(t)=\sigma(t)$ for every $t\ge Ks$. 
Since $\sigma_{Ks} \sim\pi_n$, the standard coupling inequality (see for example \cite{Levin2008}) gives, for $t \geq Ks$,
\[
   d_n(t)=\sup_\eta\|\bbP(\eta(t)\in \cdot)-\pi_n\|_{\textrm{TV}}
      \le \sup_\eta\bbP(\eta(t)\ne\sigma(t)) \leq \bbP(G_K\ne\Lambda_n).
\]
By Proposition \ref{prop:incomplete}, 
\begin{align*}
 \bbP(G_K\ne\Lambda_n)
  &\le \exp(d\log n-aK+\kappa sR_n) \leq\exp\left( (d+\kappa s  \rho_v)\, n-aK\right)\,,
\end{align*}
where $a := \kappa s - \log(1+|\cN|)$.
Choose $s>0$ so that $a > 0$ and
\[
K = \left\lceil \frac{d+\kappa s \rho_v + 2}{a}\right\rceil n\,.
\]
Then the right hand side is at most $\exp(-2n)$ for all $n$ sufficiently large, which completes the proof of the upper bound in \eqref{eq:main}.

\subsection{Proof of the lower bound}
This follows from a standard finite speed of propagation argument for the Poisson clock rings, see for example \cite{HTbook}*{Proposition 3.12} for a general statement and proof. The second part of \cite{HTbook}*{Proposition 3.12}, in the notation here, states that; there exists a constant $c=c(q,d,\cU)$ such that if $q > 0$, for all $\varepsilon \in (0,1)$, and all $n$ sufficiently large, 
\[
 t_{\mathrm{mix}}^{(n)}(\varepsilon) \ge c^{-1} n \,.
\]
The claimed bounds follow by taking worst case constants.

\subsection*{Acknowledgements}
I would like to thank Fabio Martinelli for kindly suggesting the current proof of Proposition \ref{prop:killed-contraction}, 
which is a significant improvement over an early draft using a proof generated by ChatGPT.
I would also like to thank Ivailo Hartarsky for helpful discussions.
After replacing the proof of Proposition \ref{prop:killed-contraction}, generative AI (ChatGPT, GPT-5.6) was used solely for linguistic proofreading, and any remaining errors are entirely my own.
For the purpose of open access, the author has applied a Creative Commons Attribution (CC BY) licence to any Author Accepted Manuscript version arising from this submission.

\bibliography{kcm}
\bibliographystyle{plain}
\end{document}